\def\x{{\boldsymbol{x}}}
\def\VbddcC{\tilde{\mathbb{V}}_{\rm c}}
\def\VbddcF{{\widetilde{\mathbb{V}}_{\rm f}}}
\def\VbddcF1{{\tilde{\mathbb{V}}_{\rm f}}}
\def\VbddcC1{{\tilde{\mathbb{V}}_{\rm c}}}
\def\R{{\mathbb R}}
\newtheorem{theorem}{Theorem}
\newtheorem{definition}[theorem]{Definition}
\newtheorem{remark}[theorem]{Remark}
\newproof{proof}{Proof}
\journal{Applied Mathematics Letters}
\def\fullTitle{Balancing domain decomposition by  
constraints associated with subobjects
}
\def\sbAuthor{Santiago Badia}
\def\sbmail{sbadia@cimne.upc.edu}
\def\amAuthor{Alberto F. Mart\'in}
\def\ammail{amartin@cimne.upc.edu}
\def\htnAuthor{Hieu Nguyen}
\def\htnmail{nguyentrunghieu14@dtu.edu.vn}
\def\dtuAddress{Institute of Research and Development, Duy Tan 
University, 3 Quang Trung, Danang, Vietnam.} 
\def\upcAddress{Universitat Polit\`ecnica de Catalunya, Jordi Girona 1-3, Edifici C1, 08034 Barcelona, Spain.}   
\def\cimneAddress{CIMNE – Centre Internacional de M\`etodes Num\`erics en 
Enginyeria, UPC, Esteve Terradas 5, 08860 
Castelldefels, Spain.
 }
\def\fundthanks{This work has been partially funded by the European 
Union under the ExaQUte project within the H2020 Framework Programme 
(grant agreement 800898). Santiago Badia gratefully acknowledges the 
support received from the Catalan Government 
through the ICREA Acad\`{e}mia Research Program. Hieu Nguyen would like to thank the financial 
support from  Vietnam National Foundation for Science and Technology Development 
(NAFOSTED) under Grant No.~101.99-2017.13. Finally,
the authors thankfully acknowledge the computer resources at MareNostrum
IV and the technical support provided by Barcelona Supercomputer Center.}
\def\hnr#1{{#1}}
\begin{document}
	%\pagecolor{lightgray}
	
	\begin{frontmatter}
		
		%% use the tnoteref command within \title for footnotes;
		%% use the tnotetext command for theassociated footnote;
		%% use the fnref command within \author or \address for 
		%%footnotes;
		%% use the fntext command for theassociated footnote;
		%% use the corref command within \author for corresponding 
		%%author footnotes;
		%% use the cortext command for theassociated footnote;
		%% use the ead command for the email address,
		%% and the form \ead[url] for the home page:
		%% \title{Title\tnoteref{label1}}
		%% \tnotetext[label1]{}
		%% \author{Name\corref{cor1}\fnref{label2}}
		%% \ead{email address}
		%% Title, authors and addresses
		
		\title{\fullTitle}
		%  \author[one]{\authone \fnref{myfootnote} 
		%\authtwo\corref{cor1}\fnref{\label2}}
		\author[one,two]{\sbAuthor}
		\ead{\sbmail}
		
		\author[one,two]{\amAuthor}
		\ead{\ammail}
		\author[three]
		{\htnAuthor \corref{cor1}}
		\ead{\htnmail}
		\cortext[cor1]{Corresponding author}
		
		% \address{\sharedAddress}

		\address[one]{\upcAddress}
		\address[two]{\cimneAddress}
		\address[three]{\dtuAddress}
		
		\def\myKeywords{BDDC, $\log(H/h)$, parallel 
		solver, parallel preconditioner, adaptive mesh}
		
		%% use optional labels to link authors explicitly to addresses:
		%% \author[label1,label2]{}
		%% \address[label1]{}
		%% \address[label2]{}
		\begin{abstract}
			A simple variant of the BDDC 
			preconditioner 
			in which constraints are imposed on a selected set of 
			subobjects (subdomain subedges, subfaces and vertices 
			between pairs of 
			subedges) 
			is presented. 
			We are able to show that the condition number of the  
			preconditioner is bounded by $C \big(1+\log (L/h)\big)^2$, 
			where $C$ is a constant, and $h$ and $L$ are the characteristic 
			sizes of the mesh and the subobjects, respectively. As $L$ 
			can be chosen almost freely, the condition number can  
			theoretically be as small as $O(1)$. We will discuss the 
			pros and cons of the preconditioner and its application to 
			heterogeneous problems. 
			Numerical results on supercomputers are provided. 
		\end{abstract}
		
		\begin{keyword}
			%% keywords here, in the form: keyword \sep keyword
			BDDC \sep FETI-DP  \sep optimal preconditioner 
			\sep parallel solver \sep heterogeneous problems 
			%% PACS codes here, in the form: \PACS code \sep code
			
			%% MSC codes here, in the form: \MSC code \sep code
			%% or \MSC[2008] code \sep code (2000 is the default)
			\MSC[2010] 65N55 \sep 65N22 \sep 65F08
		\end{keyword}
		
	\end{frontmatter}
	
%	 \linenumbers
	
	%% main text

%%%%%%%%%%%% SECTION 1 %%%%%%%%%%%%%%%%%%%%%%%%%%%%%%%%%%%%%%%%%%%%%%%

\section{Introduction}\label{sec1}
It is well-known in the literature 
\cite{klawonn.et.al.02,mandel_convergence_2003,mandel_algebraic_2005, 
widlundToselli05,brenner_bddc_2007,BrennerScott08}
that the condition number 
of the BDDC method and the closely related FETI-DP method is 
bounded by 
$\kappa \leq C \big(1+\log (H/h)\big)^2$,
where $C$ is a constant, and $H$ and $h$ are the characteristic sizes of the 
subdomain and the mesh, resp.  Consequently, when the mesh and its 
partition 
are fixed, users can only vary the number of constraints to adjust the 
convergence rate.

Traditionally, these constraints are values at subdomain corners, or 
average 
values on subdomain edges or faces. In the perturbed formulation of 
BDDC \cite{BadiaNguyenDD23,BadiaNguyen_perturbed}, any combination of vertex, 
edge or face constraints can be used. Normally, these combinations provide 
enough options for users to choose their desired convergence rate. However, 
for difficult problems, such as the ones with heterogeneous 
diffusion coefficients, see, e.g., 
\cite{BadiaMartinNguyen_multi_material}, the 
convergence of the BDDC method \cite{dohrmann_preconditioner_2003} can 
be 
unsatisfied even if all possible constraints are used.

In this work, we propose to impose constraints on coarse objects 
(subdomain subedges and 
subfaces, and vertices between subedges) that are 
adapted to the local mesh size and to the user's desired rate of 
convergence. We refer to this new variant as BDDC-SO.  
If $L$ is the characteristic size of the objects, we can prove that the 
conditioned number $\kappa$ of the
BDDC-SO preconditioned operator is bounded by   
\[\kappa \leq C \big(1+\log (L/h)\big)^2.\]

As $L$ can be as small as $h$, there is no restriction in the rate of 
convergence of BDDC-SO. However, the trade-off is having to solve larger coarse 
problems. Therefore, the BDDC-SO can be seen as a standalone method for 
small to medium problems (tested up to 260 million unknowns on 8K 
processors), or as a proven,  simple approach to improve the rate of 
convergence. In addition, BDDC-SO is perfectly robust w.r.t. the variation of 
the diffusion coefficient when coarse objects are chosen such that the  
coefficient associated with each object is constant or varies mildly. 
BDDC-SO also has great potentials to 
be extended to multi-level.
\section{Model problem and abstract multispace BDDC 
framework}\label{sec_model}
Let $\Omega$ be a bounded domain in $\R^n,\; 
n=2, 3$. We consider the Poisson's equation with
homogeneous Dirichlet conditions.  
%\begin{equation} 
%  \begin{array}{rll} -\nabla u(x)&= f(x)
%  \quad &\text{in } \Omega,\\ u(x)&= \;0 \quad &\text{on } \partial\Omega
%  \end{array}.        \label{PDE} 
%\end{equation} 
Its weak formulation reads as follows: find $u(x)\in H^{1}_0(
\Omega )$ such that
\begin{equation}\label{weakform}
  a(u,v)= \langle F, v \rangle, \quad \forall v(x) \in H^1_0( \Omega 
  ), 
\end{equation}
where 
$
a(u,v):=\int_{\Omega}  \nabla u\cdot \nabla v\, dx,\;
\langle F, v \rangle:=\int_{\Omega} f(x)\, v(x) dx.   
$

Let $\mathcal{T}$ be a shape-regular 
mesh of $\Omega$  with characteristic mesh size $h$,
and $U\subset H^{1}_0(\Omega)$ be the corresponding Finite Element (FE) 
space of 
continuous piecewise linear functions  
that vanish on $\partial \Omega$. Then the discrete problem associated 
with  
\eqref{weakform} is to find $u\in U$ such that 
\begin{equation} \label{discrete_form} a(u,v)=
  \langle F, v \rangle, \quad \forall v\in U.  
\end{equation}

We quote the following definition of the abstract multispace BDDC 
preconditioner from 
\cite{mandel_multispace_2008}.
\begin{definition}\label{def:abstractBDDC} Assume that $a(\cdot,\cdot)$ 
is defined and symmetric 
	semipositive definite on some larger space $W\supset U$ and there 
	exist 
	subspaces $V_k\subset W$ 
	and projections $Q_k: V_k\rightarrow U$, 
	$k=1,\dots,N$, such that $V_k$ are a-orthogonal, i.e., $V_k\perp V_l$ 
	for 
	$k\neq 
	l$, 
	$a(\cdot,\cdot)$ is positive definite on each $V_k$, $ 
	\widetilde{W}=\sum_{k=1}^{N}V_k\supset  U$ and
	\[\forall u\in U:\left[u=\sum_{k=1}^N v_k, \; v_k\in 
	V_k\right]\Rightarrow u=\sum_{k=1}^{N} Q_k v_k. \]
	Then the abstract multispace BDDC preconditioner $B:r\in 
	U^{\prime}\rightarrow U$ is defined by
	\[B:r\mapsto u=\sum_{k=1}^{N} Q_k v_k,\quad v_k\in V_k: 
	a(v_k,z_k)=\langle r,Q_kz_k\rangle, \quad \forall z_k \in V_k.\]
\end{definition}
\section{Balancing domain decomposition by constraints associated with 
subobjects}\label{sec:formulation}
We will present the formulation of our proposed BDDC preconditioner  
using the abstract BDDC framework introduced in Section 
\ref{sec_model}. First, we consider a partition 
$\Theta=\{\Omega_i\}_{i=1}^N $ of 
$\Omega$ 
into
 non-overlapping open subdomains. We further assume that every subdomain $
\Omega_i \in \Theta$ is a union of elements in
$\mathcal{T}$ and is connected.  Let $\Gamma$ be the interface of 
$\Theta$. For each $\Omega_i$, let $W_i$ be 
the space of FE functions which vanish on 
$\partial\Omega\cap\partial\Omega_i$.  We define $W=W_1\times \cdots 
W_N$ and note that functions in $W$ can be discontinuous across 
$\Gamma$. 

Let $a^{\Theta}(\cdot,\cdot)$ be the extension of 
$a(\cdot,\cdot)$ \hnr{on $W$ associated} with $\Theta$: 
$a^{\Theta}(u,v)=\sum_{\Omega_i\in \Theta} \int_{\Omega_i}\nabla u 
\nabla v dx$. Denote by 
$U_I$ the space of functions in $U$ that are zero on $\Gamma$. 
Let 
$P$ be the $a^{\Theta}$-orthogonal projection from $W$ onto $U_I$. The 
functions in $(I-P)W$ are called \emph{discrete harmonic}. 
They are 
$a^{\Theta}$-orthogonal with functions in $U_I$ and they possess the 
smallest 
energy among functions in $W$ having the same values on $\Gamma$. 

Given a mesh partition, the BDDC method is 
characterized by the selection of certain \emph{coarse degrees of 
freedom} and some \emph{weighting operator} that projects functions in 
$W$ onto $U$.
\begin{figure}[htp!]
\begin{center}
\begin{tikzpicture}[scale=0.17]
%\draw[help lines] (0,0) grid (20,20);

%% regions with large coefficients
%\draw[line width=1,fill=lightgray](6,2) rectangle (17,6);
%\draw[line width=1,fill=gray](4,13) rectangle (10,17);
%\draw[line width=1,fill=darkgray](13,15) rectangle (17,19);

% draw domain and the partition
\draw[line width=1](0,0) rectangle (20,20);
\draw [line width=1] (0,10) -- (20,10);
\draw [line width=1] (10,0) -- (10,20);

% labels for subdomains
\node[above right, scale=1] at (1,1) {$\Omega_1$};
\node[above right, scale=1] at (1,11) {$\Omega_2$};
\node[above, scale=1] at (18,1) {$\Omega_3$};
\node[above, scale=1] at (18,11) {$\Omega_4$};

% draw object

\node (A) at (10,0.35) {};
\node (B) at (10,9.5) {};

\node (C) at (10,10.5) {};
\node (D) at (10,19.6) {};

\node (E) at (0.35,10) {};
\node (F) at (9.5,10) {};

\node (G) at (10.5,10) {};
\node (H) at (19.65,10) {};

\draw [line width=3,opacity=0.6,blue,line cap=round,rounded corners] 
(A.center) -- (B.center);

\draw [line width=3,opacity=0.6,blue,line cap=round,rounded corners] 
(C.center) -- (D.center);

\draw [line width=3,opacity=0.6,blue,line cap=round,rounded corners] 
(E.center) -- (F.center);

\draw [line width=3,opacity=0.6,blue,line cap=round,rounded corners] 
(G.center) -- (H.center);

\foreach \x in {1,...,19}
\foreach \y in {1,...,19}
{
	\fill (\x,\y) circle (3pt);
}

\def\vertexmark{x};
% define vertice size
\def\vertexsize{24pt};
\draw [black] plot [only marks, mark=\vertexmark,mark size=\vertexsize] 
coordinates 
{(10,10)};

\fill (10,5) circle (12pt);
\node[below left, scale=1.2] at (10,5.5) {$\xi$};

\end{tikzpicture}
\hspace*{1cm}	
\begin{tikzpicture}[scale=0.17]
%\draw[help lines] (0,0) grid (20,20);

%% regions with large coefficients
%\draw[line width=1,fill=lightgray](6,2) rectangle (17,6);
%\draw[line width=1,fill=gray](4,13) rectangle (10,17);
%\draw[line width=1,fill=darkgray](13,15) rectangle (17,19);

% draw domain and the partition
\draw[line width=1](0,0) rectangle (20,20);
\draw [line width=1] (0,10) -- (20,10);
\draw [line width=1] (10,0) -- (10,20);

% labels for subdomains
\node[above right, scale=1] at (1,1.3) {$\widehat{\Omega}_1$};
\node[above right, scale=1] at (6,5.2) {$\widehat{\Omega}_2$};
\node[above right, scale=1] at (1,11) {$\widehat{\Omega}_3$};
\node[above right, scale=1] at (6,11) {$\widehat{\Omega}_4$};
\node[above, scale=1] at (13,1) {$\widehat{\Omega}_5$};
\node[above, scale=1] at (17.8,1) {$\widehat{\Omega}_6$};

\node[above, scale=1] at (13,13) {$\widehat{\Omega}_7$};
\node[above, scale=1] at (17.8,13) {$\widehat{\Omega}_8$};

% draw object

\node (A) at (10,0.25) {};
\node (B) at (10,9.6) {};
\node (BB) at (10,4.7) {};
\node (AA) at (10,5.3) {};

\node (C) at (10,10.4) {};
\node (D) at (10,19.8) {};
\node (CC) at (10,16.3) {};
\node (DD) at (10,15.7) {};

\node (E) at (0.25,10) {};
\node (F) at (9.65,10) {};
\node (EE) at (5.3,10) {};
\node (FF) at (4.7,10) {};

\node (G) at (10.4,10) {};
\node (H) at (19.7,10) {};
\node (HH) at (14.7,10) {};
\node (GG) at (15.3,10) {};

\draw [line width=1,line cap=round] 
(10,5)--(5,5) -- (5,10) -- (5,16) 
--(10,16);

\draw [line width=1,line cap=round] 
(15,0)--(15,20);

\draw [line width=3,opacity=0.6,blue,line cap=round,rounded corners] 
(A.center) -- (BB.center);
\draw [line width=3,opacity=0.6,blue,line cap=round,rounded corners] 
(AA.center) -- (B.center);

\draw [line width=3,opacity=0.6,blue,line cap=round,rounded corners] 
(C.center) -- (DD.center);
\draw [line width=3,opacity=0.6,blue,line cap=round,rounded corners] 
(CC.center) -- (D.center);

\draw [line width=3,opacity=0.6,blue,line cap=round,rounded corners] 
(E.center) -- (FF.center);
\draw [line width=3,opacity=0.6,blue,line cap=round,rounded corners] 
(EE.center) -- (F.center);

\draw [line width=3,opacity=0.6,blue,line cap=round,rounded corners] 
(G.center) -- (HH.center);
\draw [line width=3,opacity=0.6,blue,line cap=round,rounded corners] 
(GG.center) -- (H.center);

\foreach \x in {1,...,19}
    \foreach \y in {1,...,19}
    {
       \fill (\x,\y) circle (3pt);
    }
    
\def\vertexmark{x};
% define vertice size
\def\vertexsize{24pt};
\draw [black] plot [only marks, mark=\vertexmark,mark size=\vertexsize] 
coordinates 
{(10,10) (10,5) (10,16) (5,10) (15,10)};
\fill (10,5) circle (12pt);
\node[below left, scale=1.2] at (10,5.5) {$\xi$};
    
\end{tikzpicture}
\end{center}
\caption{Standard objects (left) and subobjects (right): vertices and 
(sub)edges are 
marked by crosses and bold shade.}\label{fig:objects}
\end{figure}


Coarse degrees of 
freedom are usually defined as functionals, which are values at or 
averages over some geometrical coarse objects. In the standard BDDC method, 
these coarse objects are vertices and/or edges and/or faces of 
subdomains. 
%An illustration of these objects is shown in Figure \ref{fig:objects} 
%(left). 
For systematic ways of classifying degrees of 
freedom into objects, we refer the reader to 
\cite{klawonn07_heterogenous,BadiaMartinNguyen_multi_material}. 

In this 
work, we propose to further partition edges and faces of the subdomains 
into subedges, subfaces and vertices between pairs of subedges. One way 
to obtain 
this is to 
partition each subdomain $\Omega_i$ into smaller \hnr{subsubdomains}. 
These 
new 
\hnr{subsubdomains} together form a new partition 
$\widehat{\Theta}=\{\widehat{\Omega}_j\}_{j=1}^{\widehat{N}}$ of 
$\Omega$. Denote by $\widehat{\Gamma}$ the interface of 
$\widehat{\Omega}$, we emphasize that $\widehat{\Gamma}\supset\Gamma$. 
The BDDC-SO method selects  
objects from \emph{the set of   
subdomain vertices, edges and faces of  
$\widehat{\Theta}$} to define its coarse degrees of freedom. A simple 
example in 
2D is illustrated in Figure \ref{fig:objects}. \hnr{On the left, 
standard 
coarse objects consisting of a vertex and four edges   
are 
shown.} On the right, obtained from a finer partition 
$\{\widehat{\Omega}_j\}_{j=1}^{8}$, we have 8 subedges and 
5 vertices between them. We emphasize that \emph{one normally does not 
have to impose 
constraints 
on all available objects}.

\hnr{\begin{remark}
The decision to break each original subdomain $\Omega_i$ into smaller 
subsubdomains $\widehat{\Omega}_j$ can be made locally in practice. 
However, in 
order to classify degrees of freedom (DOFs) into subobjects, see, e.g., 
\cite{klawonn_dual-primal_2006,BadiaMartinNguyen_multi_material}, one 
needs to know the list of subsubdomains $\widehat{\Omega}_j$ each 
interface 
DOF belongs to. In a distributed-memory context, this can be obtained after a cheap nearest-neighbor communication 
sweep among original 
subdomains.
%, in which each subdomain fetches from its neighbouring original subdomains
%the smaller subdomains $\widehat{\Omega}_j$ each of its ghost cells belong to.		
\end{remark}
}

Let $\widetilde{W}\subset W$ be the BDDC space (of BDDC-SO) 
consisting of functions in 
$W$ that have common values (continuous) at the selected coarse degrees 
of 
freedom. As we use a finer partition of the interface, the BDDC space 
associated with BDDC-SO is generally 
a subspace of the one associated with the 
standard BDDC. 

Next, let $E$ be the weighting operator (projection) from 
$\widetilde{W}$ onto $U$, that takes weighted average of contributions 
from subdomains, more precisely
\begin{equation}
\label{eq:W}
E u(\xi) \doteq  
\sum_{i:\,\Omega_i\ni \xi} \delta^{\dagger}_{i}(\xi)\,
u_{i}(\xi), \quad \hbox{where }  \delta^{\dagger}_{i}(\xi) \doteq 
\frac{\operatorname{card}(\{j:\; \widehat{\Omega}_j\ni \xi \text{ and } 
\widehat{\Omega}_j \subset \Omega_i\}) 
}{\operatorname{card}(\{j:\; \widehat{\Omega}_j\ni \xi\})}. 
%\quad \gamma\in [1/2,\infty).
\end{equation}
Here $\operatorname{card}(S)$ denotes the cardinality of the set $S$. 
\hnr{In general, the weight 
$\delta^{\dagger}_{i}(\xi)$ defined by \eqref{eq:W} is the same as the 
usual multiplicity weighting in, e.g., \cite{widlundToselli05}. However, 
for 
some 
(but 
not all) DOFs on the interface of 
the partition 
$\widehat{\Theta}$, the two weightings can be different. An example of such a DOF is 
depicted in Figure \ref{fig:objects} with a big solid dot and labeled 
as $\xi$. For this particular DOF, 
$$\delta^{\dagger}_{1}(\xi)=\frac{\operatorname{card(\{1,2\})}}{\operatorname{card(\{1,2,5\})}}=2/3,$$
while 
its multiplicity weight would be $1/2$.}

It can be checked that 
$a(\cdot,\cdot)=a^{\Theta}(\cdot,\cdot),\, V_1=U_I,\, V_2= \widetilde{W}, \, 
Q_1=I\text{ and } Q_2=(I-P)E$ satisfy the 
assumptions 
in Definition \ref{def:abstractBDDC}; see, e.g., \cite{mandel_multispace_2008}.
%We call the resulting BDDC 
%preconditioner \emph{BDDC with subobjects} or BDDC-SO.

\section{Convergence Analysis}\label{sec:analysis}
In order to establish the condition number bound for the 
BDDC-SO preconditioner, we 
consider an auxiliary BDDC 
preconditioner, BDDC-A. This preconditioner is constructed similar to 
BDDC-SO using the abstract multispace framework introduced in Section 
\ref{sec_model}. However, there are three fundamental differences. 
First, BDDC-A is defined on the partition 
$\widehat{\Theta}=\{\widehat{\Omega}_j\}_{j=1}^{\widehat{N}}$ introduced 
in Section \ref{sec:formulation} to form subobjects. This partition 
has characteristic size $L$. The new partition leads to a larger space 
of discontinuous functions $\widehat{W}\subset W$ and a different 
projection $\widehat{P}$ onto the space of inner functions associated 
with $\widehat{\Theta}$. Second, 
coarse objects of BDDC-A are chosen 
such that it shares the same BDDC space $\widetilde{W}$ with BDDC-SO. This 
is done as follows: i) on $\Gamma$, subobjects of BDDC-SO, which are 
associated with regular geometrical objects (vertices, edges and faces 
of \hnr{subsubdomains} $\widehat{\Omega}_j$), are selected, ii) on  
$\widehat{\Gamma}\backslash\Gamma$, each degree of 
freedom is treated as a vertex. In other words, we impose full 
continuity on the part of the interface outside of $\Gamma$. Third, the 
weighting operator $\widehat{E}:\widetilde{W}\to U$ is the standard 
counting weighting operator defined by 
\begin{equation}
\label{eq:W2}
\widehat{E} u(\xi) \doteq  
\sum_{j:\,\widehat{\Omega}_j\ni \xi} \widehat{\delta}_{j}(\xi)\,
u_{j}(\xi), \quad \hbox{where }  \widehat{\delta}_{j}(\xi) \doteq 
\frac{1 
}{\operatorname{card}(\{j:\; \widehat{\Omega}_j\ni \xi\})}. 
\end{equation}
From \eqref{eq:W} and \eqref{eq:W2}, it should be noted that 
$\widehat{E}u=Eu$ for all $u\in\widetilde{W}$.

\hnr{Let 
$a^{\widehat{\Theta}}(\cdot,\cdot)$ be the extension of 
$a^{{\Theta}}(\cdot,\cdot)$
on $\widehat{W}$ associated with $\widehat{\Theta}$.} The following 
is 
our main result.     
\begin{theorem}\label{lem:1} 
The condition number $\kappa$ of the BDDC-SO preconditioner  is bounded 
by
\begin{equation}\label{eq:KBApb}
\kappa \leq \max\left\{\sup_{w\in\widetilde{W}} \frac{\|(I-P)E 
	w\|_{a^{\Theta}}}{\|w\|_{a^{\Theta}}}, 1 \right\}\leq 
	\max\left\{\sup_{w\in\widetilde{W}} 
	\frac{\|(\widehat{I}-\widehat{P})\widehat{E} 
	w\|_{a^{\widehat{\Theta}}}}{\|w\|_{a^{\widehat{\Theta}}}}, 1 
	\right\}\leq C 
	(1+\log(L/h))^2. 
\end{equation}
\end{theorem}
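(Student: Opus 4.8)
The estimate \eqref{eq:KBApb} is a chain of three inequalities, and I would establish them in turn. For the first I would simply invoke the abstract condition-number bound for the multispace BDDC preconditioner of Definition~\ref{def:abstractBDDC}, applied to the data $a=a^{\Theta}$, $V_1=U_I$, $V_2=\widetilde{W}$, $Q_1=I$, $Q_2=(I-P)E$, which (as recalled just above the theorem) meet its hypotheses; see \cite{mandel_multispace_2008}. Since $Q_1=I$, the smallest eigenvalue of the preconditioned operator is $\ge 1$, while the largest is controlled by the norm of $Q_2$ on $\widetilde{W}$, which is exactly the second quantity $\max\{\sup_{w\in\widetilde{W}}\|(I-P)Ew\|_{a^{\Theta}}/\|w\|_{a^{\Theta}},\,1\}$ in the chain.

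The second inequality is the only genuinely new step, and I expect it to be a short Hilbert-space argument resting on three observations. (i) Every $w\in\widetilde{W}$ is single-valued on each $\Omega_i$ (for BDDC-A this is forced by the full continuity imposed across $\widehat{\Gamma}\setminus\Gamma$), hence on each $\widehat{\Omega}_j$; as $\widehat{\Theta}$ refines $\Theta$, the forms $a^{\widehat{\Theta}}$ and $a^{\Theta}$ then coincide on $W\times W$, so in particular $\|w\|_{a^{\widehat{\Theta}}}=\|w\|_{a^{\Theta}}$ and the two Rayleigh quotients share the same denominator. (ii) $\widehat{E}w=Ew=:u\in U$ for all $w\in\widetilde{W}$, as noted after \eqref{eq:W2}. (iii) Because $\widehat{\Gamma}\supset\Gamma$, the space $\widehat{U}_I$ of functions in $\widehat{W}$ vanishing on $\widehat{\Gamma}$ satisfies $\widehat{U}_I\subseteq U_I$: such a function vanishes on $\Gamma$ and, having no jump across $\widehat{\Gamma}\setminus\Gamma$, is single-valued on each $\Omega_i$. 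Since $P$ and $\widehat{P}$ are the orthogonal projections onto $U_I$ and $\widehat{U}_I$ in the respective inner products, Pythagoras gives $\|(I-P)Ew\|_{a^{\Theta}}^2=\|u\|_{a^{\Theta}}^2-\|Pu\|_{a^{\Theta}}^2$ and $\|(\widehat{I}-\widehat{P})\widehat{E}w\|_{a^{\widehat{\Theta}}}^2=\|u\|_{a^{\widehat{\Theta}}}^2-\|\widehat{P}u\|_{a^{\widehat{\Theta}}}^2$; by (i) the leading terms coincide, and by (i) again $\widehat{P}u$ is just the $a^{\Theta}$-orthogonal projection of $u$ onto the smaller subspace $\widehat{U}_I\subseteq U_I$, whence $\|\widehat{P}u\|_{a^{\widehat{\Theta}}}=\|\widehat{P}u\|_{a^{\Theta}}\le\|Pu\|_{a^{\Theta}}$. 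Thus $\|(I-P)Ew\|_{a^{\Theta}}\le\|(\widehat{I}-\widehat{P})\widehat{E}w\|_{a^{\widehat{\Theta}}}$ for every $w$, and taking the supremum and the maximum with $1$ closes this step.

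For the third inequality I would fall back on the classical theory. By construction BDDC-A is a genuine BDDC preconditioner on the partition $\widehat{\Theta}$, whose subdomains have characteristic size $L$ over a mesh of size $h$, with standard multiplicity scaling and a coarse space that is fully continuous across $\widehat{\Gamma}\setminus\Gamma$ and carries the selected subobject constraints on $\Gamma$; applying the first step to BDDC-A, the third quantity in \eqref{eq:KBApb} bounds its condition number. Assuming the selected subobjects form an admissible set of primal constraints in the usual sense --- enough vertex/edge/face quantities on each $\widehat{\Omega}_j$ to eliminate the rigid modes and run the standard Poincar\'e--Friedrichs and discrete Sobolev inequalities, which the full continuity off $\Gamma$ supplies automatically for the interior subsubdomains --- the classical BDDC/FETI-DP condition-number estimate \cite{klawonn.et.al.02,mandel_convergence_2003,widlundToselli05,brenner_bddc_2007} carries over verbatim with $H$ replaced by $L$, giving $C(1+\log(L/h))^2$. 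The one point I anticipate needing care is precisely this reduction --- verifying that the subobjects, in particular the vertices between pairs of subedges and in general 2D/3D geometries, always yield such an admissible coarse space for $\widehat{\Theta}$ --- since the other two inequalities amount to a citation and the elementary projection computation above.
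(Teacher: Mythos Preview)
Your proposal is correct and matches the paper's proof in structure and substance: the first and third inequalities are handled exactly as in the paper (citation of the abstract multispace bound and of the classical BDDC estimate on $\widehat{\Theta}$), and for the second you use the same three ingredients (equality of the energy norms on $\widetilde{W}$, $Ew=\widehat{E}w$, and the energy-minimizing property of the $\Theta$-harmonic extension). The only cosmetic difference is that the paper phrases the last point as ``$(I-P)Ew$ and $(\widehat{I}-\widehat{P})\widehat{E}w$ share the same trace on $\Gamma$, and the former is discrete $\Theta$-harmonic hence of minimal $a^{\Theta}$-energy among such functions,'' whereas you phrase it via Pythagoras and the inclusion $\widehat{U}_I\subseteq U_I$; these are equivalent expressions of the same orthogonal-projection fact.
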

\begin{proof}
Since the formulation of BDDC-SO follows the abstract framework of 
multispace BDDC, its condition number $\kappa$ can be bounded as in the 
first inequality in \eqref{eq:KBApb}, see 
\cite{mandel_convergence_2003,mandel_multispace_2008}. In addition, as 
BDDC-A is a standard BDDC preconditioner with regular coarse 
objects, standard weighting operator, and defined on the partition 
$\widehat{\Theta}$ having characteristic size $L$, the third inequality 
in \eqref{eq:KBApb} is a standard result (see 
\cite{mandel_convergence_2003,mandel_multispace_2008}). We only need to 
prove the second inequality in \eqref{eq:KBApb}.

First, we note that $w\in\widetilde{W}$ can be discontinuous across 
$\Gamma$ but is always continuous in each $\Omega_i$. Therefore, 
$\|w\|_{a^{\Theta}}=\|w\|_{a^{\widehat{\Theta}}}$ for all
$w\in\widetilde{W}$. Second, as $P$ and $\widehat{P}$ are projections 
onto the space of inner functions that respectively vanish on $\Gamma$ 
and $\widehat{\Gamma}\supset\Gamma$, we have 
$(I-P)Ew|_{\Gamma}=Ew|_{\Gamma}=\widehat{E}w|_{\Gamma}=(\widehat{I}-\widehat{P})\widehat{E}w|_{\Gamma}$.
 On the other hand, $(I-P)Ew$ is discrete harmonic in $\Omega$ with 
 partition $\Theta$. This implies that it has the smallest 
 $a^{\Theta}(\cdot,\cdot)$-norm among functions in $W$ which share its 
 value on $\Gamma$. In other words,
 \[\|(I-P)E w\|_{a^{\Theta}} \leq \|(\widehat{I}-\widehat{P})\widehat{E} 
 w\|_{a^{{\Theta}}}= \|(\widehat{I}-\widehat{P})\widehat{E} 
 w\|_{a^{\widehat{\Theta}}}.\]
 Here, we note that the last equality is due to the fact that 
 $(\widehat{I}-\widehat{P})\widehat{E} w$ is continuous in each 
 $\Omega_i$. 
This finishes the 
proof.
\end{proof}
\begin{remark}\label{rmk1} 
	The BDDC-SO preconditioner can be extended 
	to the case with variable diffusion coefficients, i.e., when the 
	bilinear form becomes $a(u,v)=\int_{\Omega} \alpha \nabla u\cdot 
	\nabla 
	v\, dx$, where $\alpha$ varies from element to element. As our 
	analysis relies on the convergence of the auxiliary 
	preconditioner BDDC-A, the subobjects 
	need to be chosen in such a way that the coefficient has 
	to be constant in each \hnr{subsubdomain} $\widehat{\Omega}_j$. In 
	addition, the weighting operator $E$ in \eqref{eq:W} needs to be 
	updated to incorporate $\alpha$ with  $\widehat{\delta}_{i}(\xi) 
	\doteq {\left(\sum_{j:\, \widehat{\Omega}_j\ni \xi,\, 
	\widehat{\Omega}_j\subset \Omega_i}\alpha_j\right) 
	}\slash{\left(\sum_{j:\, \widehat{\Omega}_j\ni \xi} \alpha_j 
	\right)}.$
	We have explored this direction and 
	obtained first results for multi-material problems in 
	\cite{BadiaMartinNguyen_multi_material}. 
\end{remark}
\hnr{
\begin{remark}
	As the proof of Theorem \ref{lem:1} is based on the standard result for 
	an 
	auxiliary BDDC preconditioner, we need to fulfill all requirements 
	of that result \cite{klawonn.et.al.02,widlundToselli05}. Among these requirements, it is crucial to
	have enough primal constraints (the constraints we actually impose 
	on subobjects) so that for every pair of 
	neighboring subsubdomains, there exists an acceptable path, see 
	\cite{klawonn.et.al.02,widlundToselli05,klawonn_dual-primal_2006,BadiaMartinNguyen_multi_material}.
	 %In such a path, 
	%	one needs to be able to  
	%	travel from one subdomain to another through a shared primal 
	%	object. For the  
	%	variable coefficients case, in each acceptable path, the 
	%	coefficients associated with 
	%	intermediate
	%	subdomains cannot be smaller than the coefficients of both the 
	%	starting and ending subdomains, otherwise there will 
	%	be increase 
	%	in the estimate of the condition number. 
        In general, we can 
		guarantee the existence of acceptable paths by choosing either all edge 
		or all face subobjects on the original interface. Using all the possible
		vertex constraints also leads to a scalable preconditioner but is 
		less common in practice due to its poor performance, especially 
		in 
		3D, see, e.g.,\cite{widlundToselli05}. For algorithms 
		to find close-to-minimal set of primal constraints, we refer the 
		reader to 
		\cite{klawonn_dual-primal_2006,BadiaMartinNguyen_multi_material}.
\end{remark}}

\section{Numerical Experiments}\label{sec:numex}
The standard BDDC and BDDC-SO preconditioners have been implemented in 
\texttt{FEMPAR}, a parallel 
framework for the massively
parallel FE simulation of multiphysics problems 
governed by PDEs 
\cite{badia2018fempar,fempar}. We will test these preconditioners on 
MareNostrum 4 at Barcelona Supercomputing Center on the Poisson 
problem 
\eqref{weakform} with $f=1$, 
$\Omega$ the unit cube, and Dirichlet boundary conditions  
imposed on the whole boundary of $\Omega$.
\subsection{A preconditioner with $O(1)$ condition number}\label{num1}  
\begin{table}[htp!]
	\centering
	\begin{tabular}{lccccccrrrrrr}
		\toprule
		&& \multicolumn{5}{c}{ 
		number of iterations}&&\multicolumn{5}{c}{size of the coarse 
		problem 
		}\\ 
	    \cmidrule{3-7}\cmidrule{9-13}  
		Local problem size ($k$) && 4& 8&16&24 &32 
		&& 
		\multicolumn{1}{c}{4}&\multicolumn{1}{c}{8}&\multicolumn{1}{c}{16}&\multicolumn{1}{c}{24}
		 &\multicolumn{1}{c}{32} \\
		\hline
		BDDC(vef)   && 5&6  &9  &10 & 11 &&5,859&5,859&5,859&5,859&5,859 
		\\
		BDDC-SO(vef)   && 4&4 &4  &4 & 4 
		&&5,859&32,319&150,039&354,159&532,269\\
		BDDC-SO(vf-min)   && 5&4&4  &4  & 4 
		&&2,304&13,259&53,919&146,979&236,439
		\\
		%		BDDC-SO(cfmin)   && 22 (4.05e7)&6 (2.11e0) &41 
		%(7.50e08)  &8\\ 
		\bottomrule
	\end{tabular}
	\caption{Number of PCG iterations required by BDDC and BDDC-SO 
	\hnr{and 
	associated size of the coarse problem} for 
		increasing local problem sizes $k$.}\label{Table1}
\end{table}
In the first 
experiment, which is a proof of concept of the result in Theorem \ref{lem:1}, 
we solve the problem on a set of uniform meshes 
with $10k  \times 10k\times 10k $ hexahedra, where $k=4, 8, 16, 24, 32$. 
We use $10^3$ processors, each of which is responsible for a local problem of 
size $k^3$ cells. In Table \ref{Table1}, we report the number of PCG 
iterations required to reduce the 2-norm of the residual by a factor of $10^6$ using 
BDDC and 
BDDC-SO preconditioners. For both of them, vertex, edge and face  
constraints (vef) will be used. For BDDC-SO, as the meshes get finer and 
finer 
we will use smaller subobjects to keep the ratio $L/h$ constant 
($L/h=4$). We can see that, even when all the possible constraints 
are used, the number of iterations of BDDC(vef) method increases as the 
local 
problem size increases. The number of iterations of BDDC-SO(vef), on 
the other hand, remains constant throughout. In addition, even when we 
drop edge constraints and only use half of the face constraints 
(vf-min), the number of iterations is still the same except for  
$k=4$. These results confirm the estimate in \eqref{eq:KBApb} and indicate 
that one does not need to impose constraints on all available subobjects 
to 
achieve the desired rate of convergence.    

\subsection{3D channels problem}
In practice, we rarely want to keep $L/h$ small at all cost as in 
Section \ref{num1} because the size of the 
coarse problem could be prohibitively large. In many situations, an introduction 
of a relatively small number of subobjects can greatly improve the  
convergence. 
In order to demonstrate that, we consider \eqref{weakform} with 
the modification 
described in Remark 
\ref{rmk1}. We assume that the material has the background with 
$\alpha=1$ and a system of 
channels with high diffusion coefficient $\alpha=10^{\ell},\, \ell=2, 4, 
6, 8$, see Figure \ref{fig:3d_channels} (left). First, we test the 
robustness of the two preconditioners when solving a small problem with 
the 
mesh  of $40\times 40 \times 40$ hexahedra partitioned into 
$4\times4\times4 $ subdomains. For BDDC-SO,  each subdomain 
is split into two smaller subdomains, one associated with the background 
and the other associated with the channels. These smaller subdomains 
form the fine partition $\{\widehat{\Omega}_j\}$ to define the 
subobjects. In Figure \ref{fig:3d_channels} (right), we plot the number 
of PCG iterations required by standard BDDC using all possible vertex, 
edge 
and face constraints (BDDC(vef)), and BDDC-SO using only face 
constraints (BDDC-SO(f)) to reduce the 2-norm of the residual by a factor of 
$10^{6}$. We can see that while the number of iterations of BDDC(vef) 
increases significantly as the contrast ($\ell$) increases, the number 
of iterations of BDDC-SO(f) is constant throughout. This verifies 
that BDDC-SO can be very robust for heterogeneous material problems. 
\begin{figure}[h!]
	\centering
	\includegraphics[width=6cm]{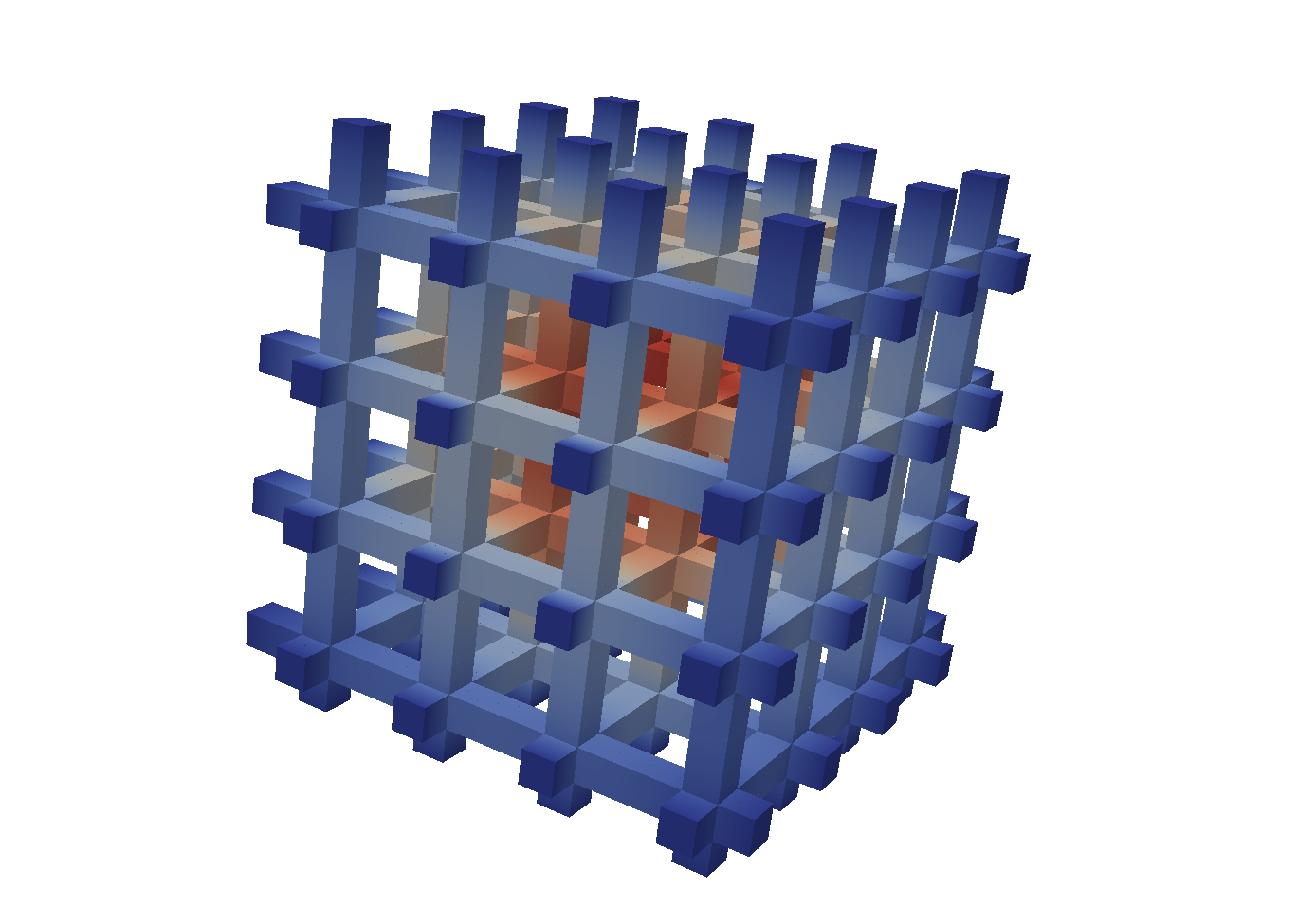}	\qquad
	\includegraphics[width=6cm]{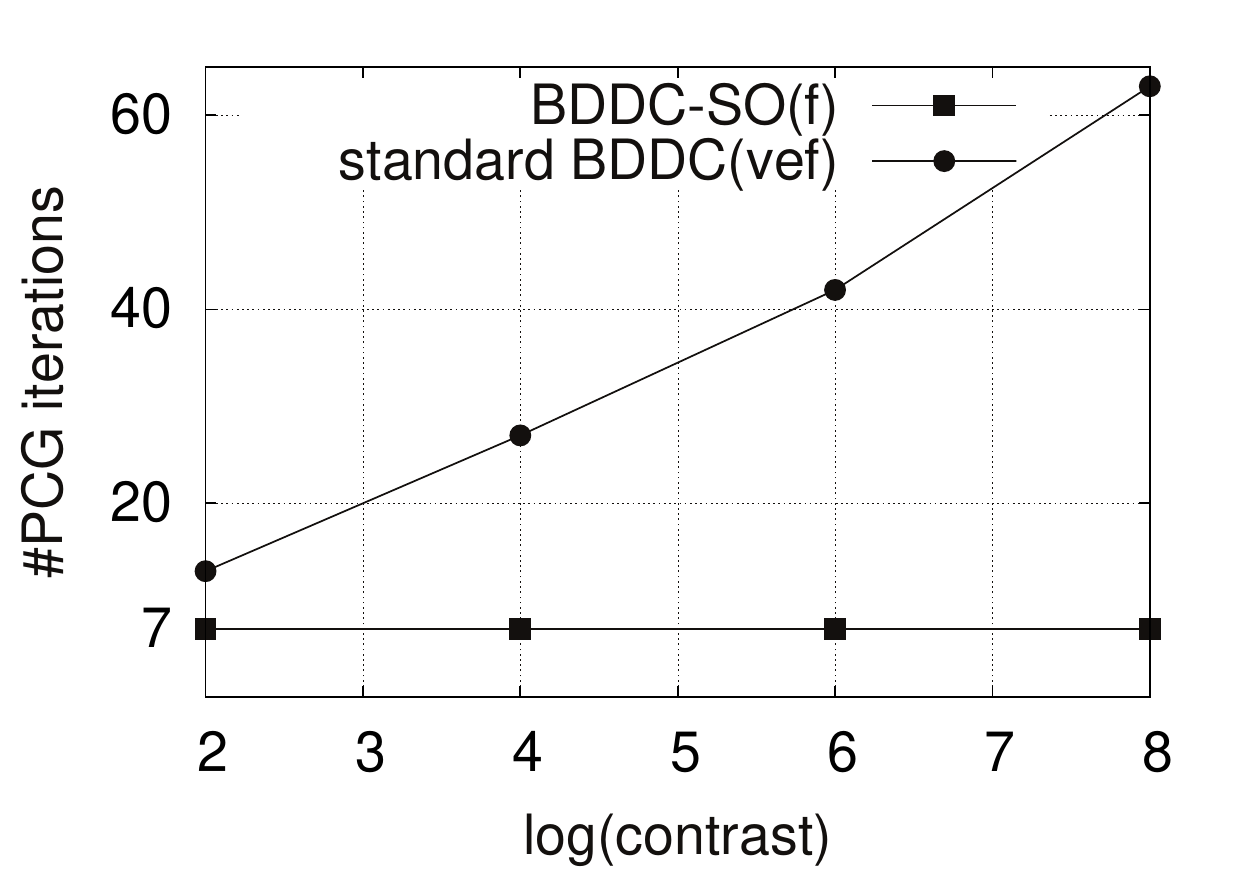}
	\caption{3D channels problem: channel of high 
		coefficient (left) and contrast robustness of BDDC and 
		BDDC-SO (right).}\label{fig:3d_channels}
\end{figure}  

Next, we consider $\ell=8$. We build 
uniform meshes with $32 m \times 32 m \times 32 m$ hexahedra. The 
largest mesh  has more than 260 million grid points. These meshes are 
partitioned into 
regular partitions of $m \times m \times m$ subdomains,
for $m=2,4,6,8,10$. The subobjects of BDDC-SO are defined similarly.
In Figure \ref{fig:3d_channels_iterations_time}, we present the weak 
scalability study in both PCG number of iterations and time (in seconds) 
of 
standard BDDC and BDDC-SO. For each preconditioner, we consider two 
variants: one using vertex and edge constraints (ve), the other using 
vertex, edge and face constraints (vef). We can observe
excellent weak scalability and robustness of BDDC-SO (bottom row).  On 
the other hand, the 
performance of standard BDDC (top row) varies widely as the number of 
processors 
increases. BDDC-SO methods are also much faster. They require less than 
21 iterations and less than 9 seconds to converge. Standard BDDC 
methods, on the 
other hand, need more than 1500 iterations and more than 300 seconds to 
converge when $m=6$ (512 subdomains).

\begin{figure}[htp!]
\centering	
\includegraphics[width=7.5cm]{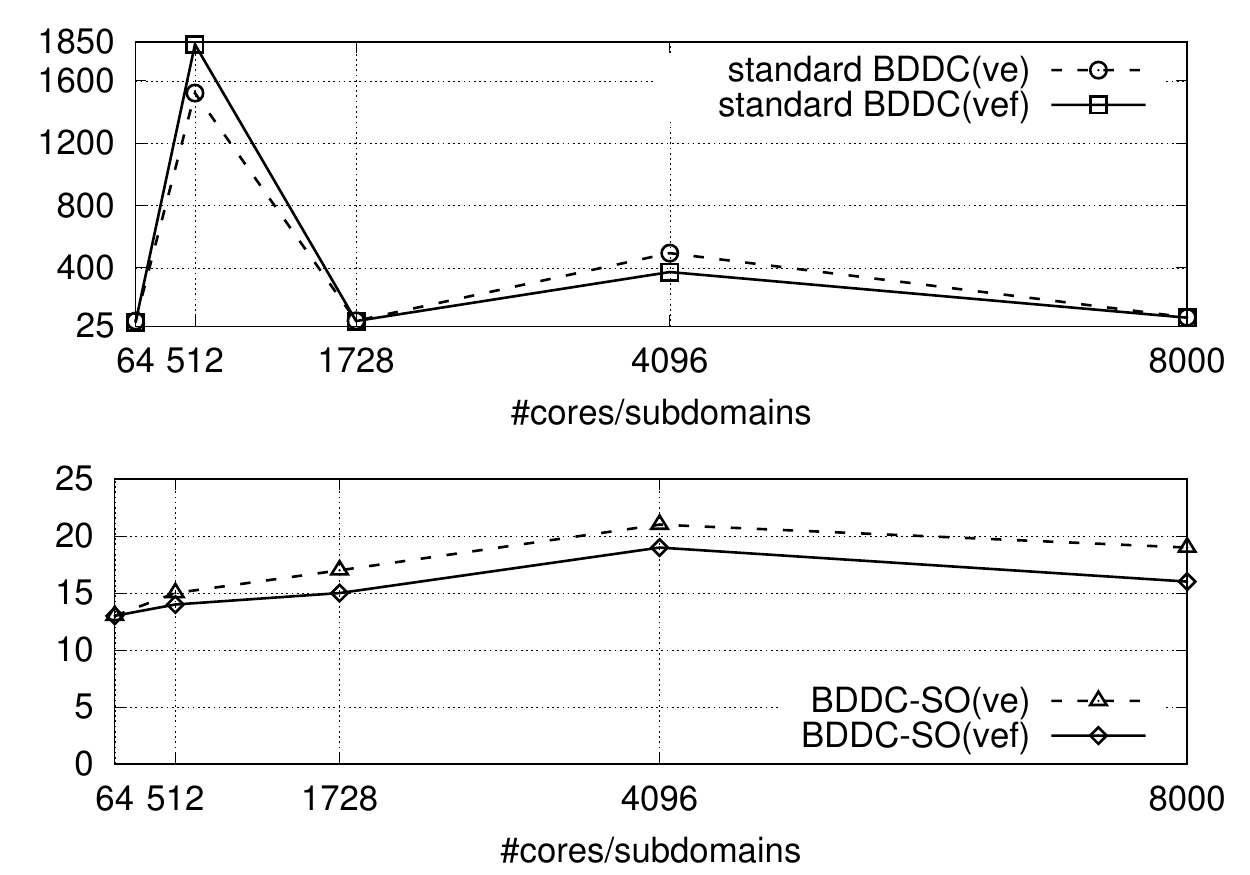}
		\qquad
\includegraphics[width=7.5cm]{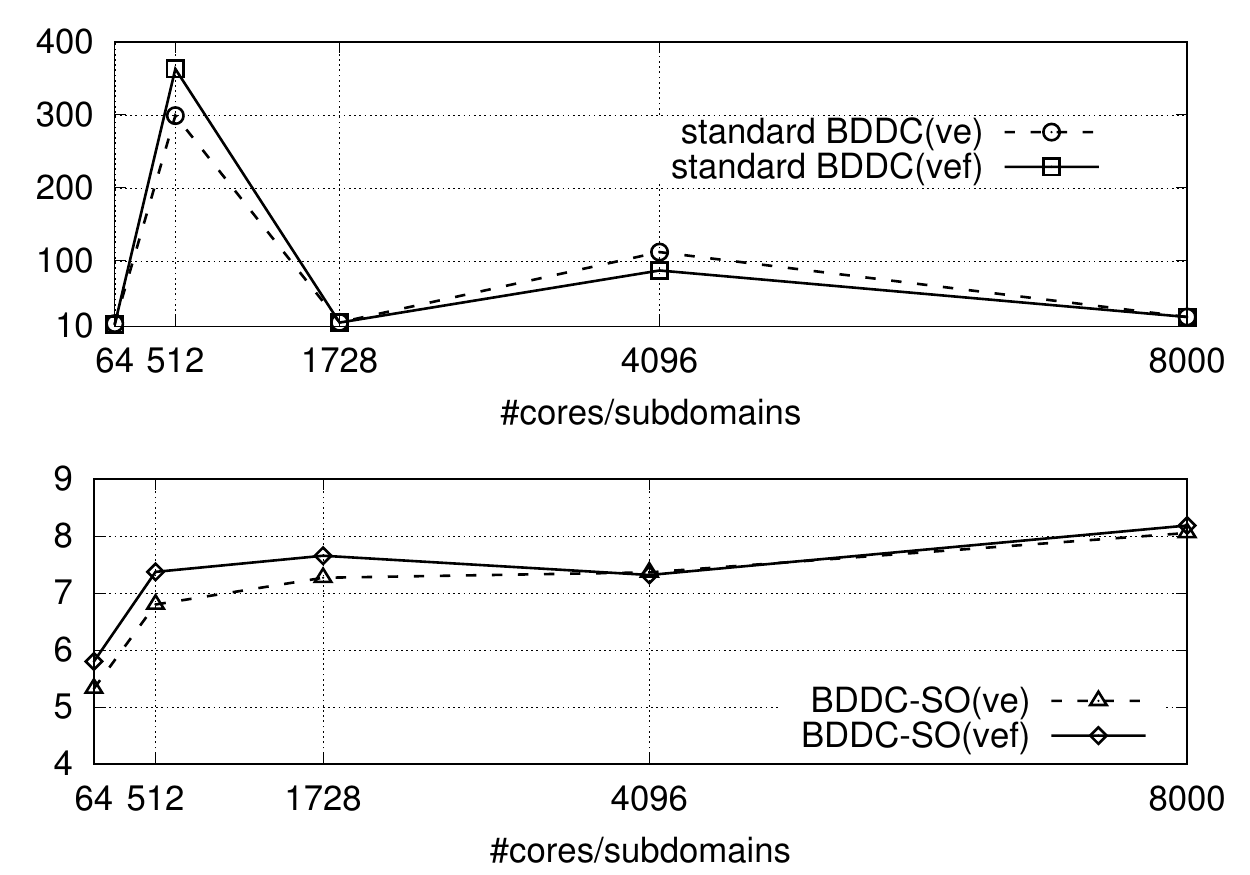}
	\caption{Weak scalability in  number of PCG 
		iterations (left) and in time in seconds (right).}
	\label{fig:3d_channels_iterations_time}
\end{figure}
\section{Conclusion}
We have introduced a new variant of BDDC preconditioner called BDDC-SO, 
where constraints are imposed on a selected set of subobjects: subedges, 
subfaces 
and vertices between subedges of  
subdomains. We are able to show that the new preconditioner has superior 
rate of convergence: the condition number depends only on the ratio of 
the characteristic sizes of the subobjects and the mesh. In 
addition, 
the proposed preconditioner is very robust w.r.t. the contrast in the 
physical coefficient. Numerical results verify these for problems with 
up to 260 millions of unknowns solved on 8K processors. 
However, the advantages come at the cost of solving  
larger coarse problems. Even though one does 
not need to impose constraints on all available subobjects to maintain 
the logarithmic bound of the condition number (only needs to have 
enough subobjects to guarantee the existence of the so-called {\it 
	acceptable paths} between neighboring fine subdomains, see 
\cite{klawonn.et.al.02,klawonn07_heterogenous,BadiaMartinNguyen_multi_material})
the size of the coarse problem of BDDC-SO can be significantly larger 
than that of BDDC. Therefore, two-level BDDC-SO is only suitable for 
small to medium size problems. In order to use BDDC-SO for large scale 
problems, one needs to extend BDDC-SO to its multi-level version, 
see \cite{tu_three-level_2007,mandel_multispace_2008} for the formulation 
of the multi-level BDDC method, and \cite{badia_extreme_2015} for its extreme scale HPC implementation. Indeed, 
BDDC-SO is very appealing for the multi-level implementation approach in 
\cite{badia_extreme_2015} \hnr{as users have great flexibility 
in defining subobjects at all coarser levels to have suitable coarser 
problem sizes and more balanced workload among 
levels.} 
 
\section*{Acknowledgments}
\fundthanks
% BIBLIOGRAPHY:
\section*{References}
%\end{comment}
%\bibliographystyle{model1b-num-names}
\bibliographystyle{plain}
%\bibliographystyle{elsarticle-num}
%\bibliography{} % insert here the name of your 
\bibliography{bddc_short}
%\bibliography{ref_new} 

%%%%%%%%%%%%%%%%%%%%%%%%%%%%%%%%%%%%%%%%%%%%%%%%%%%%%%%%%%%%%%%%%%%%%%

\end{document}